\documentclass[11pt,reqno]{amsart}
\usepackage{amsmath,amssymb,amsfonts,amsthm}
\usepackage{fullpage}
\usepackage{graphicx}
\usepackage{fixmath}
\usepackage{comment}
\usepackage{hhline}
\usepackage[latin1]{inputenc}
\usepackage[margin=2.5cm]{geometry}

\usepackage{xcolor}
\usepackage{caption}

\usepackage{url}
\usepackage{enumitem}

\usepackage{setspace} 

\numberwithin{equation}{section}

\newtheorem{theorem}{Theorem}[section]
\newtheorem{proposition}[theorem]{Proposition}
\newtheorem{corollary}[theorem]{Corollary}
\newtheorem{lemma}[theorem]{Lemma}
\theoremstyle{definition}

\begin{document}

\title{Directional expansiveness for the chair tiling}

\author{Hyeeun Jang}
\address{Department of Mathematics, Abilene Christain University, Abilene, TX 79601}
\email{hxj22b@acu.edu}

 \begin{abstract}
We study directional expansiveness of the chair tiling. We show that for every direction $V$ and every $r>0$, there exist distinct chair tilings that agree on $\mathbb{Z}^2\cap V^r$. Hence the chair tiling has no expansive direction.
 \end{abstract}


\maketitle

A chair tile is a tile made by joining three squares in an L-shape. Rotating this tile by $90^\circ$ gives tiles in four different orientations (Figure~\ref{chairtiles}). Figure~\ref{substitution} shows the substitution rule. Each tile is divided into four chair tiles according to the substitution rule. The one in the middle keeps the same orientation as the original tile, and the other three fill the three corners of the L-shape.

\begin{figure}[h]
\centering
\begin{minipage}[b]{0.5\linewidth}
\centering
\includegraphics[width=0.85\linewidth]{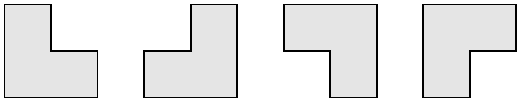}
\caption{The chair tiles}
\label{chairtiles}
\end{minipage}
\hfill
\begin{minipage}[b]{0.47\linewidth}
\centering
\includegraphics[width=0.75\linewidth]{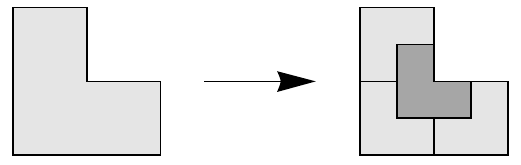}
\caption{The chair substitution.}
\label{substitution}
\end{minipage}
\end{figure}

As we repeat this process, small chairs form larger chair shapes, and those tiles again form larger tiles, so that a repeating pattern is created. If we expand the result back to the original size each time, we obtain larger and larger patches of chairs. Let us imagine covering the plane with these chair tiles. Three squares come together to form one chair, and four chairs come together to form a larger chair, so if we know the arrangement of the small tiles, we can group them into larger chairs, and those into still larger ones (Figure~\ref{hierarchy}). Now let us reverse the question. Suppose we do not know the whole tiling, and we can observe the tiling only through a long strip. Suppose we may choose both the width and the direction of this strip freely, and we can see only the tiles inside that strip. The width may be made as large as we like, but once it is chosen, it cannot be changed. Then, just by looking at the information about the tiles inside that large strip, can we reconstruct the whole chair tiling? This question is a question about directional expansiveness. Boyle and Lind~\cite{BL} defined directional expansiveness for $\mathbb{Z}^d$-actions. We define a direction $V$ to be a line through the origin.  A direction $V$ is called \emph{expansive} if there is a width $r>0$ such that any two tilings that agree inside the strip $V^r$ are equal everywhere.

\begin{figure}[h]
\centering
\begin{minipage}[b]{0.40\textwidth}
\centering
\setlength{\captionindent}{0pt}
\includegraphics[width=\linewidth]{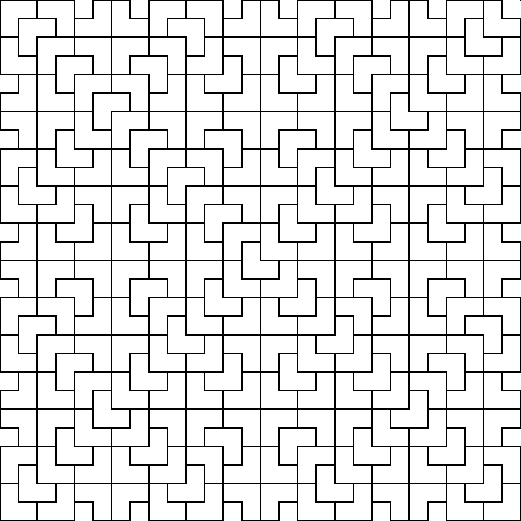}
\caption{A patch of a chair tiling.}
\label{chair}
\end{minipage}\hfill
\begin{minipage}[b]{0.56\textwidth}
\centering
\setlength{\captionindent}{0pt}
\includegraphics[width=\linewidth]{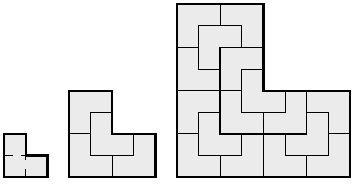}
\caption{Larger chairs formed by repeated substitution.}
\label{hierarchy}
\end{minipage}
\end{figure}

They also introduced this notion and proved two facts. The set of expansive directions is open, and the set of non-expansive directions is always non-empty. Jang and Robinson~\cite{JR} studied directional expansiveness for $\mathbb{R}^d$-actions and for Penrose tilings. They showed that for the Penrose tiling dynamical system, which has finite local complexity (FLC), there are exactly five non-expansive directions, the directions perpendicular to the 5th roots of unity \cite{JR}. From this point of view, we might expect a similar property for the chair tiling. This is because the chair uses a single tile shape and its substitution rule can also be written as a $2\times2$ pattern. But in fact this is not the case. What we will see in this note is that the chair tiling has no expansive direction. Whatever direction we choose, and however wide we make the strip, we can find two chair tilings that look the same inside the strip but are different on the whole plane. 

We follow the notions in Robinson's paper~\cite{Rob}. The chair has four orientations, shown in Figure~\ref{chairtiles}. These four chairs form the set of \emph{prototiles}, which we denote by $\pi$. For each $\tau\in\pi$, let $\sigma(\tau)$ be the tiling of the doubled chair $2\tau$ by four translates of prototiles in $\pi$, as shown in Figure~\ref{substitution}. Robinson~\cite{Rob} calls $(\pi,\sigma)$ a \emph{rep-tile set}, and we call $\sigma$ the \emph{substitution}. We apply the substitution starting with a single chair. Let $x_1=\{\tau\}$ for some $\tau\in\pi$, and define \[ x_{n+1}=\sigma(x_n), \] where $\sigma$ is applied to each tile of $x_n$ (see Figure~\ref{hierarchy}).  Each $x_n$ is a patch, that is, a finite set of tiles whose union is connected. A \emph{chair tiling} is a tiling $x$ of $\mathbb{R}^2$ by translates of tiles in $\pi$ such that every patch in $x$ is, up to translation, a patch in $x_n$ for some $n$. We write $X_c$ for the set of all chair tilings~\cite{Rob}. Figure~\ref{chair} shows a part of a chair tiling. Since each chair is made of three unit squares, we mark each square with an arrow pointing to the point where the three squares of the chair meet (see Figure~\ref{arrows}). We name the four arrows
\[
p=\nearrow,\qquad q=\nwarrow,\qquad r=\swarrow,\qquad s=\searrow,
\]
and let $\mathcal{A}=\{p,q,r,s\}$. We call $p,q,r,s$ \emph{arrows}, or equivalently, \emph{letters}.

\begin{figure}[h]
\centering
\includegraphics[width=7cm, height=4cm]{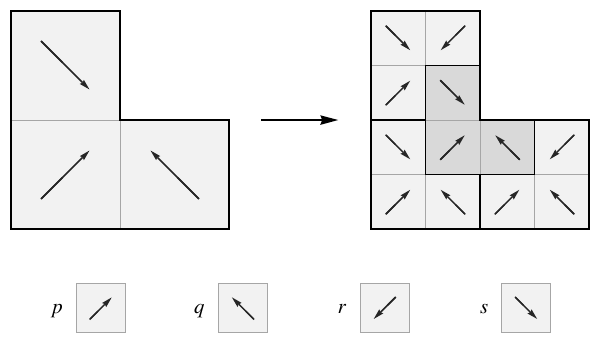}
\caption{The chair substitution with arrows, and the four letters $p, q, r, s$.}
\label{arrows}
\end{figure}

Now we ask whether we can recover the chairs from the arrows. To find a chair, we look for a point where three arrows meet. The three corresponding squares form one chair, and every arrow belongs to exactly one such chair. Robinson~\cite{Rob} calls this the \emph{three arrow rule}. Thus the arrows determine the chair tiling.

 For $x\in X_c$, let $\mathbf{t}\in[0,1)^2$ be such that the unit squares that make up the tiles of $x$ are \[Q_{\mathbf v}=[0,1)^2+\mathbf{t}+\mathbf{v}, \qquad \mathbf{v}\in\mathbb{Z}^2.\] Define $\delta(x):\mathbb{Z}^2\to\mathcal{A}$ by letting $\delta(x)(\mathbf{v})$ be the arrow in $Q_{\mathbf v}$. Thus $\delta:X_c\to\mathcal{A}^{\mathbb{Z}^2}$. We have written chair tilings as arrays, and now we do the same for the substitution. A \emph{block} is a map $\chi:B\to\mathcal{A}$, where $B\subseteq\mathbb{Z}^2$ is finite. Under the substitution, each square of a chair is replaced by a $2\times2$ block of four squares, and the arrows in these four squares are determined by the arrow in the original square. We call this rule a \emph{$2\times2$ substitution}. The corresponding substitution is a map
\[
\sigma_c:\mathcal{A}\to\mathcal{A}^{\{0,1\}^2}.
\] The substitution $\sigma_c$ is given by Equation~(5.2) in \cite{Rob}:
\begin{equation}\label{equation1}
p\mapsto\begin{smallmatrix} s & p\\ p & q\end{smallmatrix},\qquad
q\mapsto\begin{smallmatrix} q & r\\ p & q\end{smallmatrix},\qquad
r\mapsto\begin{smallmatrix} s & r\\ r & q\end{smallmatrix},\qquad
s\mapsto\begin{smallmatrix} s & r\\ p & s\end{smallmatrix}.
\end{equation}

For example, in Figure~\ref{arrows}, the lower left square of the large chair carries the arrow $p$. After the substitution, this square is replaced by the block 
$\begin{smallmatrix} s & p\\
p & q
\end{smallmatrix}$. Since $p=\nearrow$, the two squares on the diagonal from lower left to upper right keep the arrow $\nearrow$. The remaining two arrows point toward the center of the block, giving $\searrow$ in the upper left and $\nwarrow$ in the lower right. The other three rules follow in the same way. Applying $\sigma_c$ repeatedly to a letter $a\in\mathcal{A}$ gives blocks $\sigma_c^n(a)$ of size $2^n\times 2^n$. Let
\[X_c'=\delta(X_c) \] be the set of arrays obtained from chair tilings. By Proposition~6.2 of \cite{JR}, $X_c$ and $X_c'$ have the same expansive directions. From now on, we work with arrays.


We now construct two arrays in $X_c'$. Begin with the block $\sigma_c(p)=\begin{smallmatrix} s & p\\ p & q\end{smallmatrix}$ with its center at the origin. After applying $\sigma_c$, the four letters nearest the origin remain the same since each letter points along the diagonal through its corner, and the rule keeps that letter in the square nearest to the origin, as can be checked from \eqref{equation1}. So we can continue applying $\sigma_c$ repeatedly. At every repetition, the new block surrounds the previous one, so we get larger blocks, one inside the next, whose union is the entire plane. We denote the resulting array by $x_p$. If we begin with $\sigma_c(q)=\begin{smallmatrix} q & r\\ p & q\end{smallmatrix}$, the same construction produces another array, which we call $x_q$. Every block occurring in $x_p$ or $x_q$ is, up to translation, contained in some $\sigma_c^n(a)$ where $a=p$ or $a=q$ respectively. Therefore both arrays belong to $X_c'$ \cite{Rob}.

\begin{lemma}\label{V}
The arrays $x_p$ and $x_q$ differ only along two diagonal lines
extending upward from the origin, one to the right and one to the
left. These lines form a V. In particular, $x_p$ and $x_q$ agree
everywhere in the lower half plane.
\end{lemma}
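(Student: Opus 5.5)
The plan is to exploit the fact that both arrays are built quadrant by quadrant from a single seed letter, and to compare the seeds quadrant by quadrant. Place the origin at the common corner of the four central cells. By construction, the quadrant of $x_p$ lying to the upper left of the origin is the union of the blocks $\sigma_c^n(s)$ with lower right corner at the origin. The upper right quadrant is the union of the $\sigma_c^n(p)$ with lower left corner at the origin, and the two lower quadrants are the unions of the $\sigma_c^n(p)$ and $\sigma_c^n(q)$. For $x_q$ the same description holds with seed letters $q,r,p,q$ in the upper left, upper right, lower left and lower right positions respectively. This uses the observation already made in the text that the letter nearest the origin is preserved, so each $\sigma_c^{n+1}(a)$ extends $\sigma_c^n(a)$ away from the origin. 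The two lower quadrants have identical seeds ($p$ and $q$ in both arrays), so $x_p$ and $x_q$ coincide there. It remains to compare $\sigma_c^n(s)$ with $\sigma_c^n(q)$, and $\sigma_c^n(p)$ with $\sigma_c^n(r)$.

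\textbf{Key step: induction on $n$.} From \eqref{equation1},
\[
\sigma_c(s)=\begin{smallmatrix} s & r\\ p & s\end{smallmatrix},\qquad
\sigma_c(q)=\begin{smallmatrix} q & r\\ p & q\end{smallmatrix},
\]
so these two blocks agree off the diagonal running from upper left to lower right. On that diagonal they carry $s$ versus $q$, which is again the same pair. I will prove by induction that $\sigma_c^n(s)$ and $\sigma_c^n(q)$ agree at every cell off this diagonal of the $2^n\times 2^n$ square, and differ at every cell on it, where they carry $s$ versus $q$.

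For the inductive step, write $\sigma_c^{n}(a)$ as the $2\times 2$ arrangement of the blocks $\sigma_c^{n-1}(b)$, where $b$ runs over the letters of $\sigma_c(a)$. The off-diagonal quarter blocks are $\sigma_c^{n-1}(r)$ and $\sigma_c^{n-1}(p)$ in both cases, so they are identical. The two diagonal quarter blocks are $\sigma_c^{n-1}(s)$ versus $\sigma_c^{n-1}(q)$. By the induction hypothesis, these differ exactly along their own diagonals, and those diagonals together make up the diagonal of the big square.

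The same argument applies to
\[
\sigma_c(p)=\begin{smallmatrix} s & p\\ p & q\end{smallmatrix},\qquad
\sigma_c(r)=\begin{smallmatrix} s & r\\ r & q\end{smallmatrix}.
\]
These agree off the diagonal running from lower left to upper right, where both carry $s$ and $q$. On that diagonal they carry $p$ versus $r$, so the induction closes in the same way.

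\textbf{Assembling the result.} Placing these blocks at the origin, the upper left block has its lower right corner at the origin. Its difference set is therefore the ray of cells going diagonally up and to the left from the origin. The upper right block has its lower left corner at the origin, so its difference set is the ray going diagonally up and to the right. Letting $n\to\infty$, the arrays $x_p$ and $x_q$ differ exactly on these two rays, which form a V, and agree everywhere else, in particular on the lower half plane.

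I expect the main obstacle to be bookkeeping rather than anything substantive. One has to fix the placement conventions consistently: which corner of each quadrant block touches the origin, and that the diagonal of each quadrant block passing through the origin is the one carrying the differences. Once these conventions are fixed, the induction itself is a direct check from \eqref{equation1}.
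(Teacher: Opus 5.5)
Your proof is correct and follows the paper's argument. Both compare the corners of the seeds $\sigma_c(p)$ and $\sigma_c(q)$, observe that the lower corners coincide, and then show by induction that $\sigma_c^n(p),\sigma_c^n(r)$ (respectively $\sigma_c^n(s),\sigma_c^n(q)$) differ only along the diagonal through the origin, since the off-diagonal quarter blocks agree and the diagonal quarter blocks reproduce the same pair. Your version just spells out the quadrant decomposition and the inductive step more explicitly, and it also shows that the two arrays differ at every cell of the two rays.
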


\begin{proof}
We start with blocks $\sigma_c(p)$ and $\sigma_c(q)$ that have the same two lower corners, $p$ and $q$. Hence $x_p$ and $x_q$ agree in the lower half plane. The upper right corners of $\sigma_c(p)$ and $\sigma_c(q)$ are $p$ and $r$, and the upper left corners are $s$ and $q$. For the upper right, \eqref{equation1} shows that $\sigma_c(p)$ and $\sigma_c(r)$ differ only along the diagonal in the direction $\nearrow$. By induction, the same is true for $\sigma_c^n(p)$ and $\sigma_c^n(r)$ for every $n$, since by \eqref{equation1} the two blocks agree off the diagonal and carry $p$ and $r$ again on the diagonal. Similarly, $\sigma_c(s)$ and $\sigma_c(q)$ differ only along the diagonal in the direction $\nwarrow$, and so do $\sigma_c^n(s)$ and $\sigma_c^n(q)$. Therefore $x_p$ and $x_q$ differ only along the two diagonal lines forming the V.
\end{proof}

\begin{figure}[h]
\centering
\includegraphics{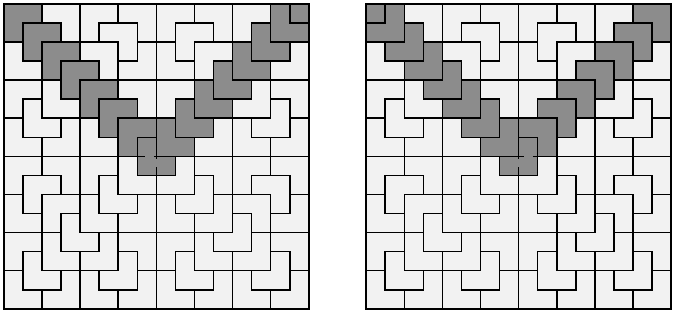}
\caption{Parts of the chair tilings $x_p$ (left) and $x_q$ (right). The shaded chairs are the ones that differ, and they form the two worms of the V.}
\label{worm}
\end{figure}

Figure~\ref{worm} shows the same thing with chairs. The different chairs lie along the two lines of the V, while all other chairs are the same. We call such a line of chairs a \emph{worm}, by analogy with the worms in Penrose tilings in Section 4.5 of \cite{JR}. In the terms of \cite{Rob}, each line of the V is a half-infinite worm, since it starts at the origin and extends in one direction only. Flipping a worm changes one chair tiling into another, as shown in~\cite{Rob}. Up to translation, $x_p$ and $x_q$ form a corner pair in the sense of Example 4.7 of~\cite{BO}.

 The V lies above the origin, so the two tilings agree on the entire lower half plane. Recall that we consider an array $x\in X_c'$ as a function $x:\mathbb{Z}^2\to\mathcal{A}$, and write $x(\mathbf{v})$ for the letter at the position $\mathbf{v}\in\mathbb{Z}^2$. For $x,y\in X_c'$, define \[ D(x,y)=\{\mathbf{v}\in\mathbb{Z}^2: x(\mathbf{v})\ne y(\mathbf{v})\}. \]

\begin{proposition}\label{halfplane}
Let $x\ne y$ be two arrays in $X_c'$. Suppose that $D(x,y)$ is disjoint from an open half plane $H$ whose boundary is parallel to a
direction $V$. Then $V$ is not expansive.
\end{proposition}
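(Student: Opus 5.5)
Fix $r>0$. The plan is to slide the pair $(x,y)$ deep into the half plane $H$, using the shift action, and then take a limit using compactness of $X_c'$. The strip $V^r$ will eventually lie entirely inside the region where the shifted arrays agree, while the limiting arrays stay distinct.

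Write $H=\{\mathbf{u}\in\mathbb{R}^2:\langle \mathbf{u},\mathbf{n}\rangle>c\}$, where $\mathbf{n}$ is a unit normal to $V$ pointing into $H$. By hypothesis, $x(\mathbf{v})=y(\mathbf{v})$ for every $\mathbf{v}\in\mathbb{Z}^2\cap H$. Since $x\neq y$, we may pick $\mathbf{w}\in D(x,y)$; it satisfies $\langle\mathbf{w},\mathbf{n}\rangle\le c$.

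Choose lattice vectors $\mathbf{m}_k\in\mathbb{Z}^2$ with $\langle \mathbf{m}_k,\mathbf{n}\rangle\to\infty$. For $\mathbf{v}\in\mathbb{Z}^2$ set $x_k(\mathbf{v})=x(\mathbf{v}+\mathbf{m}_k)$ and $y_k(\mathbf{v})=y(\mathbf{v}+\mathbf{m}_k)$. Three facts follow directly:
\begin{itemize}
\item $X_c'$ is shift-invariant, since it is the image under $\delta$ of a translation-invariant space with integer translates. So $x_k,y_k\in X_c'$.
\item We have $D(x_k,y_k)=D(x,y)-\mathbf{m}_k$. This set is disjoint from $H-\mathbf{m}_k=\{\langle\mathbf{u},\mathbf{n}\rangle>c-\langle\mathbf{m}_k,\mathbf{n}\rangle\}$, which contains $V^r$ once $\langle\mathbf{m}_k,\mathbf{n}\rangle>c+r$. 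Hence $x_k$ and $y_k$ agree on $\mathbb{Z}^2\cap V^r$ for large $k$.
\item The point $\mathbf{w}-\mathbf{m}_k$ lies in $D(x_k,y_k)$, so $x_k\ne y_k$.
\end{itemize}

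With these three facts the proof is already done, and no limit is needed. For each $r$, the pair $(x_k,y_k)$ with $k$ large consists of two distinct arrays in $X_c'$ that agree on $\mathbb{Z}^2\cap V^r$. Since $r$ was arbitrary, no width $r$ witnesses expansiveness, so $V$ is not expansive.

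The only point requiring care is the shift-invariance of $X_c'$. The array $\delta(x)$ is indexed relative to the offset $\mathbf{t}$. Translating the tiling $x$ by $-\mathbf{m}_k$ keeps the same $\mathbf{t}$ and shifts the array by $\mathbf{m}_k$, and translates of chair tilings are chair tilings. So $X_c'$ is closed under integer shifts.

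The definition of expansiveness in \cite{JR} may instead be phrased with the strip centered at a point other than the origin, or through a metric. In that case the same translation argument applies: any translate of $V^r$ is contained in a suitable translate of $H$.
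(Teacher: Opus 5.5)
Your argument is correct and is essentially the paper's proof: you translate both arrays by a lattice vector that pushes $V^r$ into $H$, and observe that the shifted arrays stay in $X_c'$, stay distinct, and agree on $\mathbb{Z}^2\cap V^r$. The compactness and limit idea you raise at the start is not needed, as you note yourself. Your explicit checks of shift-invariance of $X_c'$ and of distinctness via $\mathbf{w}-\mathbf{m}_k$ fill in details the paper states without justification.
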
\begin{proof}
Let $r>0$. Since the boundary of $H$ is parallel to $V$, we can move the strip $V^r$ far enough into $H$. Thus there is some $\mathbf{t}\in\mathbb{Z}^2$ such that $V^r+\mathbf{t}\subset H.$ Since $D(x,y)$ is disjoint from $H$, the arrays $x$ and $y$ agree at every lattice point of $V^r+\mathbf{t}$. Now translate both arrays by$-\mathbf{t}$. The translated arrays still belong to $X_c'$, they are different, and they agree at every point of $\mathbb{Z}^2\cap V^r.$ Since $r>0$ was arbitrary, no width determines the whole array in the direction $V$. Therefore $V$ is not expansive.
\end{proof}

We now use Lemma~\ref{V} and Proposition~\ref{halfplane}
to prove the following~\ref{main}
\begin{corollary}\label{main}
The chair tiling has no expansive direction.
\end{corollary}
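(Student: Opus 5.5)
The plan is to show that for every direction $V$ there is a pair of distinct arrays in $X_c'$ whose difference set $D(x,y)$ misses some open half plane $H$ whose boundary is parallel to $V$. Proposition~\ref{halfplane} then shows that $V$ is not expansive. Since $V$ is arbitrary, there are no expansive directions for $X_c'$. Proposition~6.2 of \cite{JR} then transfers this conclusion to $X_c$.

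By Lemma~\ref{V}, $D(x_p,x_q)$ lies inside the union of the two rays from (near) the origin in the directions $\nearrow$ and $\nwarrow$. Up to a bounded shift, this V is contained in the closed cone $C=\{(a,b): b\ge |a|\}$. Let $V$ be a direction with angle $\theta$ to the horizontal, where $\theta\in[0,\pi)$. Let $H$ be an open half plane with boundary parallel to $V$, chosen so that it lies ``below'' the cone. The open half plane $\{b<-c\}$ misses $C$ when $c\ge 0$, with a little extra room to absorb the finite offset of the V from the origin. More generally, a half plane $\{\mathbf{w}\cdot \mathbf{u}<-c\}$ with unit normal $\mathbf{u}$ misses $C$ precisely when $\mathbf{u}$ lies in the dual cone of $C$, that is, when $\mathbf{u}\cdot \mathbf{z}\ge 0$ for all $\mathbf{z}\in C$. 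This dual cone is again $\{(a,b): b\ge |a|\}$, so the admissible normals make an angle of at most $45^\circ$ with the vertical. Hence every direction $V$ within $45^\circ$ of the horizontal (closed range $|\theta|\le \pi/4$) is handled directly by $x_p,x_q$.

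For the remaining directions, I will use the symmetry of the chair system. Rotating a chair tiling by $90^\circ$ about the appropriate point gives another chair tiling, because the prototile set $\pi$ and the substitution $\sigma$ are invariant under rotation by $90^\circ$. At the level of arrays, this is the corresponding cyclic permutation of $p,q,r,s$ combined with rotation of $\mathbb{Z}^2$. Applying this rotation to the pair $x_p,x_q$ gives a new pair in $X_c'$ whose V opens to the left, to the right, or downward. A direction $V$ within $45^\circ$ of the vertical is then handled by the rotated pair whose V opens sideways. Together, the two pairs cover all $\theta\in[0,\pi)$.

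The main obstacle is justifying that the rotated arrays lie in $X_c'$. I would argue this via the substitution \eqref{equation1}: conjugating $\sigma_c$ by the rotation of $\{0,1\}^2$ and the letter permutation $p\to q\to r\to s\to p$ reproduces $\sigma_c$. Therefore blocks of the rotated arrays are rotated sub-blocks of some $\sigma_c^n(a)$, which are again sub-blocks of $\sigma_c^n(a')$. I would double-check this on one letter, and otherwise simply construct the rotated pair directly, starting from $\sigma_c(q)$ and $\sigma_c(r)$ in the same way as Lemma~\ref{V}. The only other care needed is the finite offset of the V's apex, which is absorbed by choosing $c$ large.
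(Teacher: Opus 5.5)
Your proposal is correct and follows essentially the same route as the paper. For slopes $|m|\le 1$ you use the pair $x_p,x_q$ with an open half plane below the V; your dual-cone computation is a general form of the paper's explicit choice $H=\{j<mi-1\}$. You then handle the remaining directions by $90^\circ$ rotation and transfer the result to $X_c$ via Proposition~6.2 of \cite{JR}. Your conjugacy check is also correct: rotation sends $\sigma_c(p)$ to $\sigma_c(q)$ and $\sigma_c(q)$ to $\sigma_c(r)$, so the rotated pair is exactly $x_q,x_r$, and this makes explicit the symmetry that the paper only invokes.
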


\begin{proof}
Let $V$ be any direction. Suppose that $V$ has slope $m$ with $|m|\le 1$. By Lemma~\ref{V}, $x_p$ and $x_q$ differ only on the V. Let \[ H=\{(i,j)\in\mathbb{R}^2:j<mi-1\}. \] Since $|m|\le 1$, the line $j=mi-1$ does not cross the V because the two lines of the V lie on $j = i$ and $j = -1-i$, which stay above $j =-1 +mi$. Then $x_p$ and $x_q$ agree at every point of $H$, and the boundary line of $H$ is parallel to $V$. Therefore, by Proposition~\ref{halfplane}, $V$ is not expansive. Now suppose that $V$ is vertical or has slope $|m|>1$. By the $90^\circ$ symmetry of the chair tiling, the same argument applies to these directions. Therefore no direction is expansive for $X_c'$. By Proposition 6.2 of~\cite{JR}, $X_C$ and $X'_c$ have the same expansive directions. Therefore, the chair tiling has no expansive direction.
\end{proof}

\end{document}